

\documentclass[preprint,12pt]{elsarticle}
\usepackage{epsfig,graphicx,times,amsmath,amsthm, amssymb, multicol}
\usepackage[T1]{fontenc}  
\usepackage{calc}
\usepackage{amsmath}
\usepackage{amsfonts}
\usepackage{amssymb}
\usepackage{multirow}
\usepackage{epstopdf}

 
\newcommand{\R}{\mathbb{R}}

\newcommand{\mb}{\mathbf }
\newcommand{\mc}{\mathcal}

\newtheorem{theorem}{Theorem}
\newtheorem{lemma}[theorem]{Lemma}
\newtheorem{Ver}[theorem]{Verification}

\newtheorem{definition}[theorem]{Definition}

\begin{document}
\begin{frontmatter}

\title{ Reduced variable  optimization methods via implicit functional dependence with applications}
\author[um1,qub1]{Christopher G. Jesudason\corref{cor1}\fnref{fn1}} 
\ead{jesu@um.edu.my,chrysostomg@gmail.com}
\address[um1]{Department of  Chemistry and Center for Theoretical and Computational Physics,
Faculty of Science, University of Malaya,
50603 Kuala Lumpur, Malaysia}
 
\address[qub1]{Atomistic Simulation Centre (ASC),
School of Mathematics and Physics,
Queen's University Belfast,
Belfast BT7 1NN,
United Kingdom}

\fntext[fn1]{Sabbatical research at QUB, 2012-2013;Tel:+447587501616.}


\begin{abstract}  Optimization methods have been broadly applied  to two classes  of objects \textit{viz.} (i) modeling and  description of data and (ii) the determination of the stationary points of functions. The overwhelming majority of these methods treat explicitly  the variables  relevant to the function optimization, where these parameters  are independently varied with a weightage factor. Here, a theoretical basis  is developed  which suggests algorithms that optimizes an arbitrary  number of variables for classes (i) and (ii) by the minimization of a function of a single variable deemed the most significant on physical or experimental grounds form which all other variables can be derived.    Often, one or more key variables play a physically more significant role than others even if all the variables are equally weighted  so that these key optimized variables serve as criteria  whether or not  reasonable solutions have been obtained, independently of the other variables.   Algorithms that focus on a reduced variable set  also avoid problems associated with multiple minima and 
maxima  that arise because of the large   numbers of parameters, and could increase  the accuracy 
of  the determination  by cutting down on  machine errors. The methods described  could have potentially significant  applications in the physical sciences where the optimization of one physically significant variable has priority over the other  variables as in the energy-position optimization schemes of computational quantum mechanics applied to structure determination, and in  chemical kinetics trajectory  calculations where the trajectory variable  is linked to many other variables of secondary importance.  For (i), we develop both an approximate but computationally more tractable  method  \textit{and} an exact method where the single controlling variable $k$ of all the other variables $\mb{P}(k)$ passes through the local stationary point of the least squares (LS) metric. For (ii), an exact theory is developed whereby the optimized function of an independent variation of all parameters coincides with that due to single parameter optimization. The implicit function theorem has to be further qualified to arrive at this result.  The topology of the surfaces of constant value of the target or cost function are considered for all the methods. A real world application of the above implicit methodology  to  rate  constant and final concentration parameter determination  for first  and second order chemical reactions from published  data  is attempted to illustrate its utility. This work is different from and more general than all the    reduction schemes for  conditional linear parameters used for example in extracting data from mixed signal spectra of  physical quantities such as found in laser spectroscopy  since it is valid for  conditional and nonconditional nonlinear parameters. Nor is it a subset of the Adomian decomposition method (ADM) used for estimating solutions of differential equations, which still require boundary conditions that do not feature in topics (i) and (ii).       
\end{abstract}
\begin{keyword}
Parameter fitting \sep  Chemical rate determination  \sep Single variable optimization \sep Generalization of conditional linear parameter optimization
\MSC 41-04 \sep 41A28  \sep 41A58  \sep 41A63 \sep 41A65
\end{keyword}

\end{frontmatter}
\textbf{MSC code} 41-04 \, 41A28  \, 41A58  \, 41A63 \, 41A65

\textbf{PACS code} 05.10.-a \,07.05.Bx \, 02.60.Pn \, 02.60.Cb \, 82.20.Nk

\section{Introduction} \label{sec:1}
 The following theory and elaboration revolve about properties of constrained and unconstrained  functions that are continuous and differentiable to various specified degrees\cite{crav1,depree1}, and the existence of implicit functions \cite{apos1} and the form of the function to be optimized. The implicit function theorem  is applied in a way that requires further qualified  because the optimization problem is  of an unconstrained kind  without any redundant variables. Methods (i)a,b (Secs.(\ref{s:m(i)a} ) and (\ref{s:m(i)b}) respectively) refers to modeling of data \cite[Chap.15,p.773-806]{nrc} where the form of the function $Q_{MD}(\mb{P},k)$ with independently varying variables $(\mb{P},k)$ have the general form
 \begin{equation} \label{e:nn1}
Q_{MD}(\mb{P},k)=\sum_{i=1}^{N_c} (y_i-f(\mb{P},t_i,k ))^2
\end{equation}
where $y_i$ and $t_i$  are datapoints and $f$ a known function, and optimizations of $Q_{MD}$ may be termed a least squares (LS) fit over parameters $(\mb{P},k)$ which  are independently optimized. Method (ii) focuses on optimizing  a general $Q_{Opt}(\mb{P},k)$ function, not necessarily LS in form. There are many standard and hybrid methods to deal with such optimization \cite[Ch.10]{nrc}, such as golden section searches in 1-D, simplex methods over multidimensions \cite[p.499-525]{nrc}, steepest descent and conjugate methods \cite{jan1} and variable metric methods in multidimensions \cite[p.521-525]{nrc} . Hybrid methods include multidimensional (DFP) secant methods \cite{dvd1}, BFGS (secant optimization) \cite{broy1}  and RFO (rational function optimization) \cite{ban1}  which is a Newton-Raphson technique utilizing a rational function rather than a quadratic model for the function close to the solution point. Global deterministic optimization schemes combine several  of the above approaches \cite[sec 6.7.6]{wales1}  Other ad hoc, physical methods  perhaps less easy to justify analytically include probabilistic "basin-hopping" algorithms \cite[sec 6.7.4]{wales1}, simulated annealing methods \cite{kirk1}  and genetic algorithms \cite[p.346]{wales1}. An analytical justification  on the other hand is attempted here, but in real-world problems some of the assumptions (e.g. $\mc{C}^2$ continuity, compactness of spaces) may not always obtain. For what follows, the distance metric used are all Euclidean, represented by $| \cdot |$ or $\| \cdot \|$ where $det\,\|\cdot \|$ represents the determinant of the matrix $|\cdot \|$. 
Reduction  of the number of variables  to be optimized is possible in the standard matrix regression model only if conditional linear parameters $\boldsymbol{\beta}$ exists \cite{batwat1}, where these $\boldsymbol{\beta}$  variables do not appear in the final $S(\boldsymbol{\theta})$ expression of the least squares function (\ref{e:x2}) to be optimized, whereas  the $\boldsymbol{\phi}$ nonconditional linear parameters do and are a subset of the $\boldsymbol{\theta}$ variables; for the existence of each conditional linear parameter, there is a unit reduction in the number of independent parameters to be optimized. These reductions  in variable number occurs for any  "expectation function" $f(\mathbf{x}, \boldsymbol{\theta})$ which is the model  or law for which a fitting is required, where there are $N$ different  datapoints  $\mathbf{x_i},\,i=1,2,\ldots N$ that must be used to determine the $p$ parameter variables $\boldsymbol{\theta}$ \cite[p.32,Ch.2]{batwat1}.  A conditionally  linear parameter $\theta_i$ exists if and only if the derivative of the expectation function $f(\mathbf{x}, \boldsymbol{\theta})$ with respect to $\theta_i$  does not  involve - in other words is independent of - $\theta_i$. Clearly such a condition may severely limit the  number of parameters that can be neglected for  the expectation function variables when the prescribed  matrix regressional techniques are employed \cite[Sec.3.5.5, p.85]{batwat1} where the residual sum of squares is minimized:
\begin{equation}\label{e:x2}
S(\boldsymbol{\theta})=\left\|\mathbf{y}-\boldsymbol{\eta}(\boldsymbol{\theta})\right\|^2.
\end{equation}
 The $N$-vectors $\boldsymbol{\eta(\theta)}$ in $P$ dimensional space defines the expectation surface. If the $\boldsymbol{\theta}$  variables  are  partitioned    into the conditional  linear parameters $\boldsymbol{\beta}$ and the other nonlinear parameters $\boldsymbol{\phi}$, then the response can be written $\boldsymbol{\eta(\beta,\phi)=A(\phi)\beta}$. Golub and Pereyra \cite{Gol1} used standard Gauss-Newton algorithm to minimise $S_2(\boldsymbol{\phi})=\left\|\mathbf{y}-\boldsymbol{A(\phi)}\boldsymbol{\hat{\beta}(\phi)}\right\|^2$  that depended only on the nonlinear parameters $\boldsymbol \phi $ where $\boldsymbol{\hat{\beta}(\phi)}=\boldsymbol{A^+(\phi)}\boldsymbol{y}$ with $\boldsymbol{A^+}$ being a defined pseudoinverse of $\boldsymbol{A}$ \cite[Sec.3.5.5,p.85]{batwat1} where $\boldsymbol{A^+}$ and $\boldsymbol{A}$ are matrices. The variables must be separable as discussed above and the number of variable reduction is only equal to the number of conditional linear parameters that exists for the problem. In applications, the preferred algorithm that exploits this valuable variable reduction is called Variable Projection.   There are many applications in time resolved spectroscopy  that is heavily dependent on this technique and   many references to the method are given in the review by  van Stokkum et al. \cite{stokkum1}.  Recently this method of variable projection has been extended  in a restricted sense  \cite{shear1} in the field of inverse problems, which is not related to our method of either modeling or optimization, nor is the methodology related to the implicit function methods. In short, much of the reported methods developed are $ad \,hoc$, meaning that they are constructed to face the specific problems at hand with no pretense to any all-encompassing generality and this work too  is $ad \,hoc$  in the sense of suggesting variable reduction with   specific  classes of non-inverse problems  as  indicated where the work develops a method  of reducing the variable number to unity for  all  variables in the expectation function space  \emph{irrespective of whether they are  conditional or not}    by approximating their values by a method of averages (for method(i)a) without any form of linear regression being used in determining their approximations during the minimization cycles, and without necessarily using the standard  matrix theory that is valid for a very limited class of functions.  Methods (i)b and (ii) are exact treatments. No  "`eliminating"' of  conditional linear parameters are involved  in this nonlinear regression method because they are explicitly calculated. Nor is any  projection in the mathematical sense involved.   These more general methods could  have useful applications  in deterministic systems comprising many parameters that are all linked to one variable, the primary one (denoted $k$ here) that is considered on physical grounds to be the most important one. A generalization of this method would be to select a smaller set of variables than the full parameter list.   Examples of multiparameter complex systems include those for multiple-step elementary reactions each with its own rate constant that gives rise to photochemical spectra signals that must be resolved unambiguously \cite{getoff1}. All these complex and coupled processes in physical theories are related by postulated laws $Y_{law}(\mathbf{P},k,t)$ that feature parameters $(\mathbf{P},k)$. Other examples include quantum chemical calculations  with  many topological and orientation variables  that need to be optimized with respect to the energy, but in relation to one or  a few variables, such as the molecular trajectory parameter during  a chemical reaction where this variable is of primary significance in deciding on the 'reasonableness' of the analysis \cite[Sec. 6.2.3,p.294]{wales1}. Method (i)a  and (i)b below    refer to LS data-fitting algorithms. Method (i)a is an approximate method where it is proved under certain conditions that it could be a more accurate determination of parameters compared to a standard LS fit using (\ref{e:nn1}).   Method (i)b develops the methodology whereby its optimum value for $Q_{MD}$  with domain values $(\mb{P},k)$ coincides with that of the standard LS method where the $(\mb{P},k)$ variables are varied independently. Also discussed are the relative accuracy of both methods (i)a in  subsection (\ref{subsec:1a})and (i)b  (endnote at end of section \ref{s:m(i)b}). Method (ii) develops a  single parameter optimization where the  conditions of an arbitrary  $Q_{OPT}(\mb{P},k)$ function are met simultaneously, \textit{viz.}
\[ \frac{\partial Q_{OPT}(k)}{\partial k} =0 \rightarrow  \left \{       \frac{\partial Q_{OPT}(\mb{P},k)}{\partial \mb{P}}=0, \frac{\partial Q_{OPT}(\mb{P},k)}{\partial k}=0   \right \}.\]
We note that methods (i)a, (i)b and (ii) are not related  to the  \mbox{Adomian} decomposition method and its variants that expands polynomial coefficients \cite{waz1} for solutions to differential equations not connected to estimation theory; indeed here there are no boundary values that determine the solution of the differential equations.
\section{Method (i)a theory} \label{s:m(i)a}
Deterministic laws of nature are sometimes written - for the simplest examples- in the form 
\begin{equation} \label{eq:n1}
Y_{law}= Y_{law}(\mathbf{P},k,t)
\end{equation}
linking the variable $Y_{law}$ to $t$. The components of $\mathbf{P}$, $P_i (i=1,2,...N_p)$ and $k$ are parameters. Verification of a law of form (\ref{eq:n1})  relies on an experimental dataset $\{ (Y_{exp}(t_i), \, t_i), i=1,2,...N) \}$. The $t$ variable could be a vector of variable components  of experimentally measured values or a single parameter as in the example below where $t_i$ denotes values of time $t$ in the domain space. The vector form will be denoted $\mathbf{x}$. Variables $(\mathbf{x})$  are defined as members of the 'domain space' of the measurable system  and  similarly  $Y_{law}$ is  the defined  range or 'response' space of the physical measurement.    Confirmation or verification of the law is based on (a) deriving  experimentally  meaningful  values for the parameters $(\mathbf{P},k)$ and (b) showing a good enough degree of fit between  the experimental set $Y_{exp}(t_i)$  and $Y_{law}(t_i)$. In real world applications, to chemical kinetics for instance, several methods  \cite[etc.]{hou1,moore1,went1,went2} have been devised to determine the optimal $\mathbf{P},k$ parameters, but most if not all these methods consider the aforementioned parameters as autonomous and independent (e.g. \cite{moore1}). A similar scenario broadly holds  for current state of the art applications of structural elucidation via energy functions \cite{wales1}.  To preserve the viewpoint of the inter-relationship  between these parameters and the experimental data, we devise a scheme  that relates $\mathbf{P}$ to $k$ for all $P_i$ via  the set $\{Y_{exp}(t_i),t_i\}$, and optimize the fit over $k$-space only. i.e. there is induced a $P_i(k)$ dependency on $k$ via the the experimental set $\{ Y_{exp}(t_i),t_i\}$. The conditions that allow for this will also  be stated.  
\subsection{Details of method (i)a} \label{subs:2}
Let  $N^\prime$ be  the number of dataset pairs $\{Y_{exp}(t_i),t_i\}$, $N_p$ the number of components of the $\mathbf{P}$ parameter, and  $N_s$ the number of singularities where the use of a particular dataset $(Y_{exp}, t)$ leads to a singularity in the determination of $\bar{P}_i(k)$ as defined below and which must be excluded from being used in the determination of $\bar{P}_i(k)$. Then $(N_p+1)\leq (N-N_s)$ for the unique determination of $\{\mathbf{P},k\}$. Let $N_c$ be the total number of different datasets that can be chosen which does not lead to singularities. If  the singularities are not choice dependent, i.e. a particular data-set pair leads to singularities  for all possible choices, then we have the following definition for $N_c$ where  $^{N-N_s}C_{N_p}=N_c$ is the total number of combinations of the data-sets $\{Y_{exp}(t_i),t_i\}$ taken $N_p$ at a time that does not lead to singularities in $P_i$.  In general, $N_c$ is determined by the nature of the data sets and the way in which  the proposed equations are to be solved. Write $Y_{law}$ in the form 
\begin{equation} \label{eq:n2}
Y_{law}(t,k)=f(\mathbf{P}, t,k) 
\end{equation}
and for a particular dataset $\{ Y_{exp}(t_i),t_i\}$, write $f(i)\equiv f(\mathbf{P}, t_i,k)$. Define the  vector function $\mathbf{f_g}$ with components $f_g(i)\equiv Y_{exp}(t_i)-f(i)=f_g(i)(\mathbf{P},k)$. Assume $\mathbf{f_g}\in \mathcal{C}^1$ defined on an open set $K_0$ that contains $k_0$. 
\begin{lemma}\label{l1}
For any such $k_0$ if $det \left \| \frac{\partial f_g(i)(\mathbf{P},k_0) }{\partial P_j} \right \|\neq 0$, $\exists$ the unique function $\mathbf{P}(k)\in \mathcal{C}^1$ (with components $P_i(k)\ldots P_{N_c}(k)$) defined on $K_0$ where $\mathbf{P}(k_0)=\mathbf{P_0}$, and where $\mathbf{f_g}(\mathbf{P}(k),k)=0$ for every $k\in K_0$. 
\end{lemma}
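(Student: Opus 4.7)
The plan is to recognize that this lemma is a direct invocation of the classical implicit function theorem (as attributed to Apostol earlier in the excerpt), applied to the square system $\mathbf{f_g}(\mathbf{P},k)=\mathbf{0}$ in a neighborhood of $(\mathbf{P_0},k_0)$. First I would fix the setup: for one admissible choice of $N_p$ data-set indices drawn from the $N-N_s$ non-singular ones, $\mathbf{f_g}$ is a $\mathcal{C}^1$ map from an open subset of $\mathbb{R}^{N_p}\times\mathbb{R}$ into $\mathbb{R}^{N_p}$, so that the number of scalar equations matches the number of unknown components $P_j$ and the Jacobian $\left\|\partial f_g(i)/\partial P_j\right\|$ is a well-defined square matrix whose determinant can be spoken of.

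Next I would collect the three hypotheses required by the implicit function theorem: (a) $\mathcal{C}^1$ regularity of $\mathbf{f_g}$ on an open set containing $(\mathbf{P_0},k_0)$, which is inherited from the stated $\mathcal{C}^1$ assumption on $f$; (b) the base-point condition $\mathbf{f_g}(\mathbf{P_0},k_0)=\mathbf{0}$, which holds because $\mathbf{P_0}$ is by construction the solution of the selected subsystem at $k_0$; and (c) the non-degeneracy $\det\left\|\partial f_g(i)(\mathbf{P_0},k_0)/\partial P_j\right\|\neq 0$, which is precisely the standing assumption of the lemma. With (a)--(c) in place, the implicit function theorem delivers an open neighborhood $K_0$ of $k_0$ and a unique $\mathcal{C}^1$ map $\mathbf{P}:K_0\to\mathbb{R}^{N_p}$ with $\mathbf{P}(k_0)=\mathbf{P_0}$ and $\mathbf{f_g}(\mathbf{P}(k),k)=\mathbf{0}$ on $K_0$, yielding the conclusion verbatim. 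If one wants an explicit form, differentiating $\mathbf{f_g}(\mathbf{P}(k),k)=\mathbf{0}$ and inverting the Jacobian gives $\mathbf{P}'(k)=-\left[\partial\mathbf{f_g}/\partial\mathbf{P}\right]^{-1}\partial\mathbf{f_g}/\partial k$, which simultaneously re-confirms $\mathcal{C}^1$ smoothness.

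The main subtlety, more bookkeeping than genuine obstacle, is the index mismatch in the statement: the components are written $P_i(k),\ldots,P_{N_c}(k)$, but $N_c$ counts admissible combinations of $N_p$ data-sets, not components of $\mathbf{P}$. I would read the lemma combination-wise, clarifying that for each of the $N_c$ admissible choices of $N_p$ data-sets one applies the implicit function theorem separately to produce a local branch $\mathbf{P}(k)\in\mathbb{R}^{N_p}$, and that the global object $\bar{P}_i(k)$ referred to outside the lemma is then built by averaging these branches. A secondary point worth being explicit about is that the lemma only asserts \emph{local} existence and uniqueness on some $K_0$; the question of how large $K_0$ may be taken, and whether a global $\mathbf{P}(k)$ exists along the primary variable $k$, is not part of the statement and should be deferred to the later methods (i)b and (ii) where the geometry of the level sets is discussed.
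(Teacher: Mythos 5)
Your proposal is correct and follows essentially the same route as the paper, whose entire proof is a one-line appeal to the implicit function theorem (Apostol, Th.~13.7); you simply make explicit the hypotheses (a)--(c) that the paper leaves implicit, and you rightly flag the index slip $P_{N_c}$ versus $P_{N_p}$ and the combination-wise reading. Nothing further is needed.
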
 

\begin{proof}
The above follows from the Implicit function theorem (IFT) \cite[Th.13.7,p.374]{apos1} where $k\in K_0$ is the independent variable  for the existence of the $\mathbf{P}(k)$ function.
\end{proof}
We seek the solutions for $\mathbf{P}(k)$ subject to the above conditions for our defined functions. 
Map $f\longrightarrow Y_{th}(\bar{\mathbf{P}}, t,k)$ as follows
\begin{equation} \label{eq:n3}
Y_{th}(t,k)= f(\bar{\mathbf{P}}, t,k)
\end{equation}
where the term $\bar{\mathbf{P}} $  and its components are   defined below and where $k$ is a varying parameter.
For any of the $(i_1, i_2, \ldots, i_{N_p} )$ combinations denoted by a combination variable $\alpha \equiv (i_1, i_2, \ldots, i_{N_p} ) $ where $i_j \equiv (Y_{exp}(t_{i_j}) , t_{i_j})$ is a particular dataset pair, it is in principle possible to solve for the components of  $\bar{\mathbf{P}}$ in terms of $k$ through the following simultaneous equations: 

\begin{equation} \label{eq:n4}
\begin{array}{rll}
Y_{exp}(t_{i_1})&=& f(\mathbf{P}, t_{i_1} , k) \\[0.5cm]
Y_{exp}(t_{i_2})&=& f(\mathbf{P}, t_{i_2} , k) \\
 
  &\vdots&  \\
  Y_{exp}(t_{i_{N_p}})&=& f(\mathbf{P}, t_{i_{N_p}} , k)
 	\end{array}
\end{equation}
 from Lemma \ref{l1}. And each $\alpha$ choice yields a unique solution  $P_i(k,\alpha)\,(i=1,2 \ldots N_p) $ where $P_i(k,\alpha)\in \mathcal{C}^1$. Hence any functions of $P_i(k,\alpha)$ involving addition and multiplication are also in $\mathcal{C}^1$.  For each $P_i$, there will be $N_c$ different solutions, $P_i(k,1),P_i(k,2), \ldots P_i(k,N_c)$ . We can define an arithmetic mean (there are several possible mean definitions that can be utilized)  for the components of $\bar{\mathbf{P}}$ as 
\begin{equation} \label{eq:n5b}
\bar{P_i}(k)= \frac{1}{N_c}\sum_{j=1}^{N_c}P_i(k,j) .
\end{equation}
In choosing an appropriate functional form for $\bar{\mathbf{P}}$ (eq.\ref{eq:n5b}) we assumed equal weightage for each of the dataset combinations; however the choice is open, based on appropriate physical criteria. We verify  below  that the choice of $\bar{\mathbf{P}}(k)$ satisfy the  constrained variation of the  LS method so as to emphasize the connection between the level-surfaces of the unconstrained LS with the line function $\bar{\mathbf{P}}(k)$ .  
\begin{figure}[htbp]
\includegraphics[width=6cm]{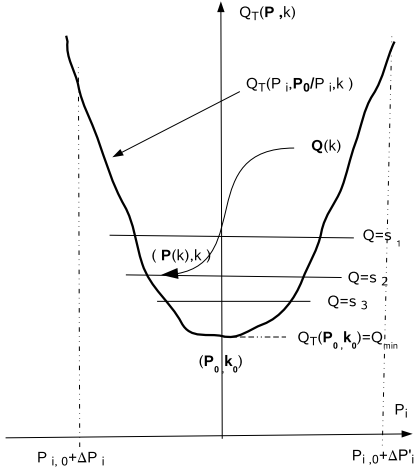}
\caption{ Depiction of how the $k$ variation optimizing $Q$ leads to a solution on a level surface of the  $Q_T$ function where $Q_T \le Q$.}
\label{fig:4n}
\end{figure}

Each $P_i(k,j)$ is a function of $k$ whose derivative is known either analytically or by numerical differentiation.
To derive an optimized set, then for the LS method, define 
\begin{equation} \label{eq:n5}
Q(k)= \sum_{i=1^\prime}^{N^\prime} (Y_{exp}(t_i)-Y_{th}(k,t_i) )^2.
\end{equation}

Then for an optimized $k$, we have $Q^\prime(k)=0.$ Defining 
\begin{equation} \label{eq:n6}
R(k)=\sum _{i=1^\prime}^{N^\prime}(Y_{exp}(t_i)- Y_{th}(k,t_i)  ).Y_{th}^\prime(k,t_i)
\end{equation}
the optimized solution of $k$ corresponds to $R(k)=0$  which  has been reduced to a  one dimensional problem. The standard LS variation  on the other hand  states that the variables $P_T=\{\mathbf{P},k\}$ in (\ref{eq:n2}) are independently varied so that 
\begin{equation} \label{e:6n}
Q_T(P_T)=\sum_{i=1}^{N^\prime} (Y_{exp}(t_i)-f(P_T,,t_i ))^2.
\end{equation}
with solutions for $Q_T$  in terms of $P_T$ whenever $\partial Q_T/\partial P_T=0$. Of interest is the relationship between the single variable variation  in (\ref{eq:n5} ) and the total variation in (\ref{e:6n}). Since $\overline{\mathbf{P}}$ is a function of $k$, then (\ref{e:6n}) is a constrained variation      where
\begin{equation} \label{e:8n}
\delta Q(k)=\delta Q(\overline{\mathbf{P}},k)=\left( \frac{\partial Q}{\partial \overline{\mathbf{P}}}.\delta\overline{\mathbf{P}} +\frac{\partial Q}{\partial k} \right )
\end{equation}
subjected to $g_i(\overline{\mathbf{P}},k)=\bar{P}_i-h_i(k)=0$ (i.e. $ \bar{P}_i=h_i(k)$ for some function of k) and where $\bar{P}_i$ are the components  of $\bar{\mathbf{P}}$. According to the Lagrange multiplier theory \cite[Th.13.12,p.381]{apos1} the function $f:\R^n\rightarrow \R$ 
  has an optimal value at $x_0$ subject to the constraints $g:\R^n\rightarrow \R^m$ over the subset $S$ where
 $\mathbf{g}=(g_1,g_2\ldots g_m)$ vanishes, i.e.  $x_0 \in  X_0 $ where $X_0=\{ x:x \in S, \mathbf{g}(x)=0\}$ when  either of the following  equivalent equations (\ref{e:12n},\ref{e:13n}) are satisfied
\begin{eqnarray}
 D_rf(x_o)&+&\sum_{k=1}^m \lambda _k D_rg_k(x_0)=0 \,\,(r=1,2,\ldots n)   \label{e:12n}\\
  \nabla f(x_0)&+&\lambda_1\nabla g_1(x_0)+\ldots    \lambda_m\nabla g_m(x_0) = 0   \label{e:13n}  
\end{eqnarray}
where $det \| D_jg_i(x_0)\|\neq 0$ and the $\lambda$'s are invariant real numbers. We refer to $\bar{P}_i$  as any variable that is a function of $k$ constructed on physical or mathematical grounds, and not just to the special defined in (\ref{eq:n5b}). Write 
\begin{equation} \label{e:14n}
g_i=\bar{P}_i-p_i(k)=0\,(i=1,2,\ldots N_p)
\end{equation}
where $D_jg_i(x_o)=\delta_{ij}$ since  $D_j=\partial/\partial \bar{P}_j$ and therefore $det \|D_jg_i\neq0\|$.
We abbreviate the functions $f(i)=f(\mathbf{P},t_i,k)$ and  $\bar{f}(i)=f(\mathbf{\bar{P}},t_i,k)$. Define  
\begin{equation} \label{e:15n}
f_Q(x)\equiv Q(\bar{\mathbf{P}},k,t)= \sum_{i=1'}^{N'}(Y_{exp}(t_i)-\bar{f}(i))^2
\end{equation}
where $Y_{exp}(t_i)$ are the experimental subspace variables as in (\ref{eq:n4}) with $x\in X_0$ defined above. We next  verify the relation between $Q(k)$ and $Q_T$ . 
\begin{Ver}
The solution $Q^\prime(k)$=R(k)=0 of (\ref{eq:n6} ) is equivalent  to the variation of $f_Q(x)$ defined in (\ref{e:15n}) subjected to constraints $g_i$ of (\ref{e:14n}).
\end{Ver}

\begin{proof}
Define the Lagrangian to the problem as $\mathcal{L}=f_Q(x)+\sum_{i=1}^{N_p}\lambda_i g_i$. Then the equations that satisfy the stationary condition 
\begin{equation} \label{e:16n}
\frac{\partial \mathcal{L}}{\partial \bar{P}_j}=0 \,\,(j=1,2,\ldots N_p);\frac{\partial \mathcal{L}}{\partial k}=0
\end{equation}
reduces to the  (equivalent) simultaneous equations 
\begin{eqnarray}
      \sum_{i=1}^{N'} (Y_{exp}(t_i)&-&\bar{f}(i))\frac{\partial \bar{f}(i)}{\partial \bar{P}_j} =\lambda^\prime_j \,\,(j=1,2,\ldots N_p)          \label{e:17n}\\
\sum_{i=1}^{N'} (Y_{exp}(t_i)&-&\bar{f}(i))\frac{\partial \bar{f}(i)}{\partial k} +\sum_{j=1}^{N_p}\lambda^\prime_j\frac{\partial p_j}{\partial k}= 0   \label{e:18n}  
\end{eqnarray}
Substituting $\lambda ^\prime_j$ in (\ref{e:17n}) to (\ref{e:18n})  leads to 
\begin{equation} \label{e:19n}
\sum_{i=1}^{N'}(Y_{exp}(t_i)-\bar{f}(i))\frac{\partial \bar{f}(i)}{\partial k}+\sum_{j=1}^{N_p}\sum_{i=1}^{N'}(Y_{exp}(t_i)-\bar{f}(i)).\frac{\partial \bar{f}(i)}{\partial \bar{P}_j}.\frac{\partial p_j}{\partial k}=0.
 \end{equation}
Since  $\frac{d\bar{P}_i}{dk} = \frac{\partial\bar{p}_i}{\partial k}$, then  (\ref{e:19n}) is equal to  $\frac{dQ(\bar{\mathbf{P}},k,t)}{dk}=0 $ of   the $Q$ functions in (\ref{e:6n},\ref{e:8n} and \ref{e:15n}).
\end{proof}
Of interest is the theoretical relationship of the $\bar{\mathbf{P}},k$ variables of the $Q$ function  described by (\ref{e:8n},\ref{eq:n5}) denoted $Q_1$ and those of the free variational $Q$ function of (\ref{e:15n}) denoted $Q_2$ with the variable set which can be written
\begin{eqnarray}
              Q_1 &=& Q( \bar{\mathbf{P}},t,k) \label{e:20n }\\
             Q_2 &=& Q( \mathbf{P},t,k)   \label{e:21n  }  
\end{eqnarray}
which is given by  the following theorem, where we abbreviate $\alpha_i=(Y_{exp}(t_i) -f(\mathbf{P},t_i,k) )$ and $\bar{\alpha}_i=(Y_{exp}(t_i) -f(\bar{\mathbf{P}},t_i,k) )$ where we note that the $f$ functional form is unique and of the same form  for both these $\alpha$ variables.
\begin{theorem}\label{t:3}
The unconstrained LS solution to $Q_2=Q(\mathbf{P},k,t)$ for the independent variables $\{\mathbf{P},k \}$ is also a solution for  the constrained variation single variable $k'$ where $\mathbf{P}=\bar{\mathbf{P}}(k'),k=k'$. Further, the two solutions coincide if and only if 
\[  \sum_{i=1}^{N'}  (Y_{exp}(t_i) -f(\bar{\mathbf{P}},t_i,k) )\frac{\partial f(\bar{\mathbf{P}},t_i,k) }{\partial \bar{P}_j}  \,, j=1,2,\ldots N_p.\] 
\end{theorem}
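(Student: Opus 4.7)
The plan is to leverage the Lagrangian machinery developed in the Verification immediately preceding the theorem. There, the stationary conditions of $\mathcal{L}=f_Q+\sum_j\lambda_j g_j$ were shown to reduce to the pair (\ref{e:17n})--(\ref{e:18n}), with the $\lambda'_j$-eliminated form (\ref{e:19n}) identical to $dQ(\bar{\mathbf{P}},k,t)/dk=0$ along the constraint curve $\bar{\mathbf{P}}=\bar{\mathbf{P}}(k)$. My strategy is to compare this pair against the unconstrained stationary conditions $\partial Q/\partial P_j=0$ and $\partial Q/\partial k=0$ and to show that the unconstrained equations force (\ref{e:17n})--(\ref{e:18n}) to hold with every $\lambda'_j=0$, so that the reduced single-variable equation is automatically satisfied.

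For the first assertion I would argue as follows. Let $(\mathbf{P}^*,k^*)$ be an unconstrained stationary point of $Q_2$, so that $\sum_i\alpha_i\,\partial f/\partial P_j=0$ for every $j$ and $\sum_i\alpha_i\,\partial f/\partial k=0$ at $(\mathbf{P}^*,k^*)$. Choose any admissible parameterization $\bar{\mathbf{P}}(k)$ passing through $\mathbf{P}^*$ at $k=k^*$, whose local existence is guaranteed by the IFT invoked in Lemma \ref{l1} under the non-vanishing Jacobian hypothesis. Then (\ref{e:17n}) immediately yields $\lambda'_j=0$ at $k^*$, and substituting into (\ref{e:18n}) leaves the identity $\sum_i\bar{\alpha}_i\,\partial\bar{f}/\partial k=0$, which is automatic. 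Hence $k^*$ is a stationary point of the reduced problem $R(k)=0$, and since $\bar{\mathbf{P}}(k^*)=\mathbf{P}^*$ by construction, the full constrained solution $(\bar{\mathbf{P}}(k^*),k^*)$ agrees with the unconstrained one.

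For the if-and-only-if clause I would observe that the reduced equation (\ref{e:19n}) differs from the full unconstrained system by precisely the step that absorbs the $N_p$ separate conditions $\partial Q/\partial\bar{P}_j=0$ into a single weighted combination $\sum_j\lambda'_j\,\partial p_j/\partial k$. The two solution sets therefore coincide exactly when each individual $\lambda'_j$ vanishes at the candidate point, and up to the multiplicative factor $-2$ these $\lambda'_j$ are precisely the sums displayed in the theorem statement. One direction (sufficiency) is immediate: if those sums are zero, then (\ref{e:17n}) gives $\lambda'_j=0$ and (\ref{e:18n}) reduces to $\partial Q/\partial k=0$, recovering the full unconstrained stationarity. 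The other direction follows by noting that the unconstrained problem demands the $N_p+1$ stationarity conditions to hold \emph{separately}, not merely in the combined form $R(k)=0$.

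The principal obstacle I anticipate is not deep but rather notational and logical: one must (a) distinguish the free variables $\mathbf{P}$ of the unconstrained problem from the tied variables $\bar{\mathbf{P}}(k)$ of the reduced one, which coincide only at the candidate point, and (b) read ``coincide if and only if'' as an equivalence between solution \emph{sets}, not merely between isolated scalar equations. To rule out accidental cancellation in $\sum_j\lambda'_j\,\partial p_j/\partial k$ when some $\lambda'_j\neq 0$, a brief non-degeneracy remark on the derivatives $\partial p_j/\partial k$, or a direct appeal to the linear independence of the $N_p$ Lagrange equations in (\ref{e:17n}), will be needed to close the converse cleanly.
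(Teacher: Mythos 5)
Your proposal is correct and takes essentially the same route as the paper: the paper expands the total derivative $dQ_1/dk$ directly via the chain rule in (\ref{e:24n}) and notes that the unconstrained stationarity conditions (\ref{e:22n})--(\ref{e:23n}) annihilate both groups of terms, while you arrive at the identical equation through the Lagrangian form (\ref{e:19n}) whose equivalence the preceding Verification already established, your $\lambda^\prime_j$ being exactly the paper's factors $\sum_{i}\bar{\alpha}_i\,\partial\bar{f}(i)/\partial\bar{P}_j$ that must vanish. Your two added precautions --- insisting that the curve $\bar{\mathbf{P}}(k)$ actually pass through the unconstrained stationary point, and observing that the single scalar equation $R(k)=0$ cannot by itself force each $\lambda^\prime_j=0$ without a non-degeneracy remark --- are left implicit in the paper and sharpen rather than change the argument.
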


\begin{proof}
The $Q_2$ unconstrained solution is derived from the equations 
\begin{eqnarray}
   \frac{\partial Q_2}{\partial P_j}&=& c.\sum_{i=1}^{N'}\alpha_i\frac{\partial f(i)}{\partial P_j}= 0    \, ,j=1,2,\ldots N_p             \label{e:22n}\\
 \frac{\partial Q_2}{\partial k}&=& c.\sum_{i=1}^{N'}\alpha_i\frac{\partial f(i)}{\partial k}= 0              \label{e:23n}  
\end{eqnarray}
with $c$ being constants. If there is a $\bar{\mathbf{P}}(k)$ dependency, then we have 
\begin{equation} \label{e:24n}
\frac{dQ_1(k)}{dk}=c.\sum_{j=1}^{N_p}\left(\sum_{i=1}^{N'}\bar{\alpha}_i\frac{\partial \bar{f}(i)}{\partial P_j}  \right)\frac{\partial \bar{P}_j}{\partial  k} 
+c.\sum_{i=1}^{N'}\bar{\alpha}_i\frac{\partial \bar{f}(i)}{\partial k}.
\end{equation}
If the variable set $\{ \mathbf{P},k\}$ satisfies (\ref{e:22n}) and (\ref{e:23n}) in unconstrained variation, then the values when substituted into (\ref{e:24n} ) satisfies the equation $\frac{dQ_1(k)}{dk}=0$ since $f(i)$ and $\bar{f}(i)$ are the same functional form. This proves the first part of the theorem. The second part follows from the converse argument, where from (\ref{e:24n} ), if  $\frac{dQ_1(k)}{dk}=0$, then setting one factor  to zero in (\ref{e:25n} ) leads to the implication of (\ref{e:26n})
\begin{eqnarray}
    \sum_{i=1}^{N'}\bar{\alpha}_i\frac{\partial \bar{f}(i)}{\partial\bar{P}_j} &=& 0\, ,j=1,2,\ldots N_p   \label{e:25n}\\
               &\Rightarrow & \nonumber\\ 
\sum_{i=1}^{N'}\bar{\alpha}_i\frac{\partial \bar{f}(i)}{\partial k}       &=&0    \label{e:26n}  
\end{eqnarray}
 the solution set $\{\bar{\mathbf{P}}(k'), k'\}$  which satisfies $\frac{dQ_1(k)}{dk}=0$ is satisfied by the conditions of both (\ref{e:25n}) and (\ref{e:26n}). Then (\ref{e:25n}) satisfies (\ref{e:22n}) and  (\ref{e:26n}) satisfies (\ref{e:23n}).
\end{proof}

The theorem, verification and lemma above do not indicate topologically under what conditions a coincidence of solutions for the constrained and unconstrained models exists. Fig.(\ref{fig:4n}) depicts the discussion below. From theorem (\ref{t:3}), if set $A$ represents the solution $\{\mb{P},k\}$ for the  unconstrained LS method and set $B=\{\mb{\bar{P}},k'\}$ for the constrained method , then $B\supseteq A$. 
Define $k$   within the range $k_1\leq  k \leq  k_2$. Then $k$ is  in a compact space, and since $\mb{P}(k)\in \mc{C}^1$,$\mb{P}(k)$
is uniformly  continuous, \cite[TH.8,p.79]{depree1}. Then admissible solutions to the above constraint problem with the inequality $B\supseteq A$ implies $Q(\mb{P}(k)) \geq  Q_{min}$ where $Q_{min}$ is the unconstrained minimum. The unconstrained $Q=Q_T$ LS function to be minimized  in (\ref{e:6n}) implies 
\begin{equation} \label{e:A2}
\nabla Q=0\,\,(\frac{\partial Q}{\partial k}=\frac{\partial Q}{\partial P_i}=0 \, \mathrm{for}\,i=1,2,\ldots N_p )
\end{equation}
Defining the constrained function $Q_c(k)=\sum_{i=1}^{N^\prime}(Y_{exp}(i)-f(\mb{P}(k),t_i,k))^2$ then $Q_c(k)=Q\circ P_T$ where $P_T=(P_1(k),P_2(k),\ldots P_{N_p}(k),k)^T$. Because $Q^{\prime}_c(k)=\left( \frac{\partial Q}{\partial \mb{P}},\frac{\partial Q}{\partial k}  \right )\cdot \left ( P_1'(k)\ldots 
P_{N_p}(k),k\right)^T$, solutions occur when (i) $\nabla Q=0$ corresponding to the coincidence of the local minimum of the unconstrained $Q$  for the best choice for  the line with coordinates $(\mb{P}(k),k)$ as it passes  through the local unconstrained minimum and (ii) $P^\prime _i(k)=0,(i=1,2,\ldots N_p),\frac{\partial Q}{\partial k} =0$ where this solution is a special case of (iii) when the vector $ P^{\prime}_T$ is $\perp$  to $\nabla Q \neq 0$, i.e.  $P^{\prime}_T$  is at a tangent to the surface $Q=S_2$ for some $S_2  \geq Q_{min}$ where this situation is shown in Fig.(\ref{fig:4n}) where the vector is tangent at some point of the surface $Q_T=S_2$. Whilst the above characterizes the  topology of  a solution, the existence of a solution  for the line  $(\mb{P}(k),k)$  which passes through the point of the unconstrained minimum  of $Q_T$ is proven below  under certain conditions  where a set of equations are constructed  to allow for this exceptionally important application. Also discussed is the  case when it may be possible for unconstrained solution  set $U$  to  satisfy the inequality $Q_T(U)\geq Q_c$, where $Q_C$ is a function designed to accommodate all solutions of  (\ref{eq:n4}).

\subsection{Discussion of LS fit for a function $Q_c$  with  a possibility of a smaller LS deviation than  for $\{\mb{P},k\}$  parameters derived from a free variation of eq.(\ref{e:6n})}\label{subsec:1a}
The LS function metric such as (\ref{e:6n}) implied $Q(\mb{P},k)\leq Q(\mb{\bar{P}},k) )$ at a stationary (minimum) point for variables $(\mb{P}, k)$. On the  other hand, the sets of solutions (\ref{eq:n4}), $N_c$ in number provides for each set exact solutions $\mb{P}(k)$ averaged to $\mb{\bar{P}}(k)$. If the $\{\mb{\bar{P}}_i\},i=1,2,\ldots N_c$ solutions are in a $\delta$-neighbourhood, then it may be possible that the composite function metric to be optimized  over all the sets of equations $\{\}_i$, $N_c$ in number defined here 
\begin{equation} \label{e:N30}
Q_c(\mb{P},k)=\sum_{i\in \{\}_1, \{\}_2\ldots \{\}_{N_c} }(Y_{exp} -f(i)    )^2
\end{equation}
could be such that 
\begin{equation} \label{e:N31}
Q_c(\mb{P},k)\geq Q_c(\mb{\bar{P}},k)
\end{equation}
implying that, under these conditions, the $Q_c$ of (\ref{e:N30}) is a better measure of fit. For what follows, the $\mb{P}(k)_{\{\}_i}$ for equation set $\{\}_i$ obtains  for all $k$ values of the open set $\mc{S},\,\,k\in \mc{S}$, from the IFT, including $k_0$ which minimises (\ref{eq:n5}). Another possibility that will be discussed briefly  later  is where  in (\ref{e:N30}), all $\{\mb{P},k\}$ are free to vary. Here we consider the case of the $N_c$ $\mb{P}$ values averaged to $\mb{\bar{P}}$ for some $k$. We recall the Intermediate-value theorem (IVT) \cite[Th.4.38,p.87]{apos1}for real continuous functions $f$  defined over 	   a connected domain $S$  which is a subset of some  $\R^n$. We assume that the $f$ functions immediately below obey the IVT. For each $\{\mb{P}_i\}$  solution of the $\{\}_i$ set for a specific $k=k_0$ we assume that the function  $Q_{c,i}(\mb{P})$ is a strictly increasing function in the sense  of definition(\ref{d:1})below, where 
\begin{equation} \label{e:n31b}
Q_{c,i}(\mb{P})=\sum_{j\in \{\}_i} (Y_{exp}(j)-f(j))^2
\end{equation}
with $Q_{c,i}(\mb{P})(k_0)=0$, in the following sense:

\begin{definition}\label{d:1}
A real function f is (strictly) increasing on a connected domain $S\in \R^N$ about the origin at $r_0$ if relative to this origin, if $\left| r_2\right| >\left|r_1 \right|$ (of the boundaries $\partial$ of ball $B(r_0,r_1)$ and $B(r_0,r_2)$ implies both $\mathrm{max}\,f(\partial B(r_o,r_2)(>)\geq \mathrm{max}\,f(\partial B(r_o,r_1)$ and $\mathrm{min}\,f(\partial B(r_o,r_2)(>)\geq \mathrm{min}\,f(\partial B(r_o,r_1)$.
\end{definition}
\textit{Nb.}\,A similar definition obtains for a (strictly) decreasing function with the $(<)\leq $ inequalities. Since the $\partial B$ boundaries are compact, and $f$ continuous, the maximum and minimum values  are attained for all ball boundaries. 
\begin{lemma} \label{lm2}
For any region bounded by $\partial B(r_0,r_1$ and $\partial B(r_0,r_2$ with coordinate $\mb{r}$ (radius $r$ centered about coordinate $r_0$) ,
\begin{equation} \label{e:n32}
\mathrm{min} f(\partial B(r_0,r_1)< f(\mb{r}) < \mathrm{max} f(\partial B(r_0,r_2).
\end{equation}
\end{lemma}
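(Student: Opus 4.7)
The plan is to reduce the inequality to a direct two-step application of the strictly increasing property in Definition \ref{d:1}, with the observation that $\mb{r}$ itself sits on an intermediate sphere whose extrema trivially bracket $f(\mb{r})$. First I would set $r_3 \equiv |\mb{r}-r_0|$ so that, because $\mb{r}$ lies in the open annular region bounded by $\partial B(r_0,r_1)$ and $\partial B(r_0,r_2)$, we have $r_1 < r_3 < r_2$. Compactness of the sphere $\partial B(r_0,r_3)$ together with continuity of $f$ (the hypothesis that $f$ obeys the IVT is predicated on continuity on a connected set) guarantees that $\max f(\partial B(r_0,r_3))$ and $\min f(\partial B(r_0,r_3))$ are attained, and since $\mb{r}\in \partial B(r_0,r_3)$ we get the trivial sandwich
\[
\min f(\partial B(r_0,r_3)) \;\leq\; f(\mb{r}) \;\leq\; \max f(\partial B(r_0,r_3)).
\]

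Next I would invoke Definition \ref{d:1} twice with the strict-increase clause. Applying it to the radii $r_1 < r_3$ yields $\min f(\partial B(r_0,r_1)) < \min f(\partial B(r_0,r_3))$; applying it to $r_3 < r_2$ yields $\max f(\partial B(r_0,r_3)) < \max f(\partial B(r_0,r_2))$. Chaining these with the trivial sandwich above gives
\[
\min f(\partial B(r_0,r_1)) \;<\; \min f(\partial B(r_0,r_3)) \;\leq\; f(\mb{r}) \;\leq\; \max f(\partial B(r_0,r_3)) \;<\; \max f(\partial B(r_0,r_2)),
\]
which collapses to the asserted strict inequality (\ref{e:n32}).

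The argument is essentially a bookkeeping exercise, so there is no deep obstacle; the only subtlety to watch is that the statement must implicitly exclude $\mb{r}$ from the two extreme boundary spheres themselves, because strict inequality can fail if $\mb{r}$ attains the minimum of $f$ on $\partial B(r_0,r_1)$ or the maximum on $\partial B(r_0,r_2)$. I would therefore make explicit that ``the region bounded by'' is interpreted as the open annulus $r_1 < |\mb{r}-r_0| < r_2$, so that the strict-increase clause of Definition \ref{d:1} is available for both comparisons. If one wants the weaker inequality $\leq$ throughout, only the non-strict clause of Definition \ref{d:1} is needed, and the closed annulus suffices; I would mention this as a remark since the succeeding arguments in Section \ref{subsec:1a} only use the sandwich structure, not the strictness per se.
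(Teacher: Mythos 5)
Your proof is correct and is essentially the paper's own argument: both rest on the observation that $\mb{r}$ lies on an intermediate sphere $\partial B(r_0,r_3)$ with $r_1<r_3<r_2$, so that the monotonicity of $\min$ and $\max$ over sphere boundaries from Definition \ref{d:1} brackets $f(\mb{r})$; the paper merely phrases the same inequality chain as a one-line contradiction rather than directly. Your added remark that the region must be read as the \emph{open} annulus for the strict inequalities to hold is a fair clarification of the lemma's statement, not a divergence in method.
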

\begin{proof} 
Suppose in fact $f(\mb{r})< \mathrm{min}f(\partial B(r_0,r_1)))$ then $\mathrm{min}f(\partial B(r_0,r)))< \mathrm{min}f(\partial B(r_0,r_1))) \,\,(r>r_1)$ which is a contradiction  and a similar proof obtains for the upper bound.
\end{proof}
\textit{Nb.}\, Similar conditions apply  for the non-strict inequalities $\leq ,\geq $ . The function that is optimized  is 
\begin{equation} \label{e:n32b}
Q_{c}(\mb{P})=\sum_{i=1}^{N_c}Q_{c,i}(\mb{P})
\end{equation}
Define $\mb{P_i}$ as the solution vector for  the equation set $\{\}_i$. We illustrate the conditions where the solution $\mb{P_T}$ for a free variation for the $Q$ metric  given in(\ref{e:6n}) can fulfill the inequality  where $Q_c$ is as defined in (\ref{e:n32b})
\begin{equation} \label{e:n33}
Q_c(\mb{P_T})\geq Q_c(\bar{\mb{P}})
\end{equation}
with $\bar{\mb{P}}$ given as in (\ref{eq:n5b}). A preliminary result is required. Define $\mathrm{max}\,\Vert \mb{P}_i-\mb{P}_j\Vert=\delta\,\,\forall \,i,j=1,2,\ldots N_c$ and $\delta \mb{P}_i=\bar{\mb{P}}-\mb{P}_i$.

\begin{lemma} \label{lm3}
$\Vert \delta \mb{P}_i \Vert \leq  \delta.$
\end{lemma}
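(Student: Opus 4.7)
The plan is to prove the lemma directly from the definition of $\bar{\mb{P}}$ as an arithmetic mean, combined with the triangle inequality and the definition of $\delta$ as a maximum pairwise distance. The key observation is that any deviation from the mean can be rewritten as an average of pairwise differences with that particular vector, each of which is controlled by $\delta$.

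First I would rewrite the deviation vector as
\begin{equation*}
\delta \mb{P}_i = \bar{\mb{P}} - \mb{P}_i = \frac{1}{N_c}\sum_{j=1}^{N_c}\mb{P}_j - \mb{P}_i = \frac{1}{N_c}\sum_{j=1}^{N_c}(\mb{P}_j - \mb{P}_i),
\end{equation*}
where I have used the fact that $\mb{P}_i = \frac{1}{N_c}\sum_{j=1}^{N_c}\mb{P}_i$ to absorb $\mb{P}_i$ inside the sum. Then applying the triangle inequality for the Euclidean norm and bounding each summand using the definition $\max_{i,j} \Vert \mb{P}_i - \mb{P}_j \Vert = \delta$, I obtain
\begin{equation*}
\Vert \delta \mb{P}_i \Vert \leq \frac{1}{N_c}\sum_{j=1}^{N_c}\Vert \mb{P}_j - \mb{P}_i \Vert \leq \frac{1}{N_c}\cdot N_c \cdot \delta = \delta.
\end{equation*}
Noting that the term with $j=i$ vanishes, one in fact has the slightly sharper bound $\Vert \delta \mb{P}_i \Vert \leq \frac{N_c-1}{N_c}\delta$, but the weaker form $\leq \delta$ is all that is asserted and all that is needed for the subsequent estimates.

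There is no real obstacle here: the argument is just the standard ``centroid lies within the diameter'' bound applied componentwise through the Euclidean norm, and the only subtle point is to notice that inserting $\mb{P}_i = \frac{1}{N_c}\sum_j \mb{P}_i$ converts the deviation from the mean into an average of pairwise differences that the definition of $\delta$ directly controls. This is a preparatory estimate that will later be combined with the strictly-increasing hypothesis of Definition \ref{d:1} and Lemma \ref{lm2} to establish the inequality (\ref{e:n33}).
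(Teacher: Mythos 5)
Your proof is correct and is essentially identical to the paper's own argument: both rewrite $\bar{\mb{P}}-\mb{P}_i$ as $\frac{1}{N_c}\sum_j(\mb{P}_j-\mb{P}_i)$, apply the triangle inequality, and bound each pairwise distance by $\delta$. Your added remark that the bound sharpens to $\frac{N_c-1}{N_c}\delta$ is a valid minor observation not present in the paper.
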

\begin{proof} 
$\Vert \bar{\mb{P}}-\mb{P}_i \Vert=\frac{1}{N_c}\Vert \sum_q \mb{P}_q -N_C\mb{P}_i \Vert\leq \frac{1}{N_c}\sum_{j=1}^{N_c}\Vert \mb{P}_j-\mb{P}_i\Vert \leq \delta$.
\end{proof}
\begin{lemma} \label{lm4}
$Q_c(\mb{P}_T,k) > Q_c(\mb{\bar{P}},k)$ for $\delta <\Vert\mb{P}_T-\mb{P}_i\Vert < \delta_T$ for some $\delta_T$
\end{lemma}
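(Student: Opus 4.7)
The plan is to reduce the aggregate inequality on $Q_c$ to per-index comparisons $Q_{c,i}(\mb{P}_T)>Q_{c,i}(\bar{\mb{P}})$ and then to invoke the strictly-increasing hypothesis (Definition \ref{d:1}) applied to each $Q_{c,i}$ about its own minimiser $\mb{P}_i$, with the bound on $\bar{\mb{P}}$ coming from Lemma \ref{lm3}.

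First I fix $i\in\{1,\dots,N_c\}$ and set $r_i^0:=\Vert\bar{\mb{P}}-\mb{P}_i\Vert$, $r_i^T:=\Vert\mb{P}_T-\mb{P}_i\Vert$. Lemma \ref{lm3} gives $r_i^0\le\delta$, while the hypothesis on $\mb{P}_T$ forces $r_i^T>\delta\ge r_i^0$, so $\bar{\mb{P}}$ sits on a sphere $\partial B(\mb{P}_i,r_i^0)$ strictly inside the sphere $\partial B(\mb{P}_i,r_i^T)$ containing $\mb{P}_T$. Writing $m_i(r)$ and $M_i(r)$ for the minimum and maximum of $Q_{c,i}$ over $\partial B(\mb{P}_i,r)$, Definition \ref{d:1} yields that both $m_i$ and $M_i$ are strictly increasing in $r$; Lemma \ref{lm2} pins $Q_{c,i}(\bar{\mb{P}})\le M_i(r_i^0)$ and $Q_{c,i}(\mb{P}_T)\ge m_i(r_i^T)$.

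The next step is to exhibit a cutoff $\delta_T>\delta$ such that for every $r_i^T$ in the annulus $(\delta,\delta_T)$ the strict jump $m_i(r_i^T)>M_i(r_i^0)$ still holds uniformly in $i$; continuity of the sphere extrema in $r$ (from continuity of $Q_{c,i}$ on the compact spheres) together with the strict monotonicity of Definition \ref{d:1} guarantees such a window for each $i$, and taking the common $\delta_T$ over the finite index set $i=1,\dots,N_c$ gives the uniform bound. Combining these yields $Q_{c,i}(\mb{P}_T)\ge m_i(r_i^T)>M_i(r_i^0)\ge Q_{c,i}(\bar{\mb{P}})$ for each $i$; summing the $N_c$ strict inequalities and using the decomposition $Q_c=\sum_i Q_{c,i}$ from (\ref{e:n32b}) then gives $Q_c(\mb{P}_T)>Q_c(\bar{\mb{P}})$, as required.

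The main obstacle I anticipate is the gap between $M_i(r_i^0)$ (max on the inner sphere) and $m_i(r_i^T)$ (min on the outer sphere): Definition \ref{d:1} only gives strict monotonicity of $m_i$ and $M_i$ separately, not a comparison between them, so closing this gap requires careful calibration of $\delta_T$ against the oscillation of $Q_{c,i}$ on spheres. This is precisely where the uniform bound $r_i^0\le\delta$ from Lemma \ref{lm3} is indispensable: without the confinement of $\bar{\mb{P}}$ to a $\delta$-ball around every $\mb{P}_i$, no single radial window $(\delta,\delta_T)$ could be chosen uniformly in $i$.
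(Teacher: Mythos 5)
Your route is essentially the paper's: confine $\bar{\mb{P}}$ to within distance $\delta$ of every $\mb{P}_i$ via Lemma \ref{lm3}, trap $Q_{c,i}(\bar{\mb{P}})$ and $Q_{c,i}(\mb{P}_T)$ between sphere extrema via Lemma \ref{lm2}, compare index by index, and sum over $i=1,\ldots,N_c$. You have also correctly isolated the step that carries all the weight: passing from ``$\bar{\mb{P}}$ lies on an inner sphere, $\mb{P}_T$ on an outer sphere'' to $m_i(r_i^T)>M_i(r_i^0)$. Definition \ref{d:1} only orders $m_i(r)$ against $m_i(r')$ and $M_i(r)$ against $M_i(r')$; it never compares a minimum on one sphere with a maximum on another. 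The paper's own proof silently asserts exactly this comparison -- the left inequality of (\ref{e:n36}), $\epsilon_{max,i}<Q_{c,i}(\mb{P}_T)$, is claimed to follow from (\ref{e:n32}), whereas Lemma \ref{lm2} only yields $Q_{c,i}(\mb{P}_T)>\min Q_{c,i}$ on the sphere of radius $\delta$, not $Q_{c,i}(\mb{P}_T)>\max Q_{c,i}$ there -- so you have located the genuine weak point of the argument.

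Your proposed repair, however, does not close that gap; continuity points the wrong way. If $r\mapsto m_i(r)$ is continuous, then for $r_i^T$ in a window $(\delta,\delta_T)$ hugging $\delta$ the value $m_i(r_i^T)$ is close to $m_i(\delta)$, while $M_i(r_i^0)$ can be as large as $M_i(\delta)$ (take $r_i^0$ near $\delta$, which Lemma \ref{lm3} permits). Whenever $Q_{c,i}$ genuinely oscillates on the sphere of radius $\delta$ one has $m_i(\delta)<M_i(\delta)$, and the desired inequality $m_i(r_i^T)>M_i(r_i^0)$ then fails throughout any sufficiently thin window above $\delta$. What is actually required is that $\mb{P}_T$ be far enough out that the radial growth of $m_i$ overtakes the tangential oscillation of $Q_{c,i}$, i.e.\ $m_i(r_i^T)>M_i(\delta)$; Definition \ref{d:1} alone does not guarantee such a radius exists, and when it does the admissible set has the form $r_i^T>\delta_*$ for some $\delta_*\geq\delta$ rather than an interval $(\delta,\delta_T)$ extracted by continuity. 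To make the lemma sound one must either add a hypothesis bounding the oscillation of each $Q_{c,i}$ on spheres by its radial increment, or restate the hypothesis on $\mb{P}_T$ as ``sufficiently far from every $\mb{P}_i$'' in the sense just described (which is in fact how the paper informally glosses the result immediately after the proof). As written, both your argument and the paper's leave this step unproved.
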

\begin{proof} 
Any  point $\mb{\bar{P}}$ would be located within an spherical annulus centred at $\mb{P}_i$,  with radii so chosen so that by  lemma (\ref{lm2}), the following results:
\begin{equation} \label{e:n34}
\epsilon _{max,i} > Q_{c,i}(\mb{\bar{P}})> \epsilon _{min,i}
\end{equation}
where $f=Q_{c,i} $ in (\ref{e:n31b}). Choose $\delta_i$  so that $\delta_i<\Vert\delta\mb{P}_i<\delta$ . Define $Ann(\delta,\delta_i, \mb{P_i})$ as the space bounded by the  boundary of the balls centered on $\mb{P}_i$ of radius $\delta$ and $\delta_i$  ($\delta > \delta_i$ ). Then $\mb{\bar{P}}\in Ann(\delta,\delta_i, \mb{P_i})$ by lemma (\ref{lm3}). Since $Q_c$ in (\ref{e:N30}) is not equivalent to $Q_T=Q$ in (\ref{e:6n})where we write here the free variation vector solution as  $\mb{P}_T$, then the above results leads to the following:
\begin{eqnarray}
    \sum_{i=1}^{N_c} \epsilon _{min,i} &<& Q_c(\mb{\bar{P}}) = \sum_{i=1}^{N_c}Q_{c,i}(\mb{\bar{P}})<\sum_{i=1}^{N_c}\epsilon_{max,i} \label{e:n35}\\
            \epsilon _{max,i} &<&  Q_{c,i} (\mb{P_T}) < \epsilon _{T,i} \label{e:n36}  
\end{eqnarray} 
 where (\ref{e:n36}) follows from (\ref{e:n32}) . Summing  (\ref{e:n36}) leads to $Q_c(\mb{P}_T,k) > Q_c(\mb{\bar{P}},k)$.
\end{proof}
Hence we have demonstrated that it may be more realistic or accurate  to fit parameters based on  a function that represents different coupling sets  such as $Q_C$ above rather than the standard LS method using (\ref{e:N30}) if $\mb{P}_T$  lies  sufficiently  far away from $\mb{\bar{P}}$. We note that if $\mb{P_T}$ is the solution of the free variation of  the above $Q_c$ in (\ref{e:N30}), then from the arguments presented after the proof of theorem(\ref{t:3}), it follows that 
\begin{equation} \label{e:n37b}
Q_{c} (\mb{P_T})\leq Q_{c} (\mb{\bar{P}})
\end{equation}
which implies that the independent variation of all parameters in LS optimization of the $Q_c$ variation is the most accurate  functional form to use assuming equal weightage of experimental measurements than the standard  free variation of parameters using  the $Q$ function of (\ref{e:6n}).
\section{Theory of method (i)b} \label{s:m(i)b}
Whilst it is advantageous in science  data-analysis  to  optimize a particular multiparameter function by focusing on a few key variables (our $k$ variable of  restricted dimensionality, which we have applied  to a 1-dimensional optimization in the next section) , it has been shown that this method yields a solution that is always of higher value  for the same $Q$ function  than a full  and independent parameter optimization, meaning that it is less accurate. The key issue, therefore, is whether for any $Q$  function, including those of the $Q_c$ variety,  it is possible to construct a $k$ parameter optimization such that the line  of parameter variables $\mb{P}(k)$ passes through the minimum surface of the $Q$ function. We develop a theory to construct such a function below. However, Method(i) may still be advantageous because of the greater simplicity of the equations to be solved, and the fact that $\mathcal{C}^1$ $f(i)$ functions were required, whereas here there  the $f(i)$ functions must be at least $\mathcal{C}^2$ continuous. 
\begin{theorem} \label{th2}
For the $Q_T(\mb{P},k)$ function defined in (\ref{e:6n}), where each of the $f(i)$ functions are $\mathcal{C}^2$  on an open set $\R^{N_p+1}$ and where $Q_T$ is convex, the solution at any point $k$ of $\frac{\partial Q_T}{\partial P_j}=0, (j=1,2,\ldots N_p) $ whenever $det \left \|\frac{\partial Q_T}{\partial P_i \partial P_j}  \right \|\neq 0$ at $\frac{dQ_T}{dk}(k^\prime)=0 $ determines uniquely the line equation $\mb{P}(k)$ that passes the minimum of the  function $Q_T$ when $k=k^\prime$.
\end{theorem}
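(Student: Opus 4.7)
The plan is to combine the implicit function theorem with the chain rule, and then to use the convexity hypothesis to identify the stationary point as the global minimum. First I would view the $N_p$ equations $\partial Q_T/\partial P_j(\mb{P},k)=0$ as an implicit system for $\mb{P}$ with $k$ playing the role of the independent parameter, exactly as $k$ was treated in Lemma~\ref{l1}. Because each $f(i)$ is $\mc{C}^2$, the Jacobian of this system with respect to $\mb{P}$ is precisely the Hessian block $\|\partial^2 Q_T/\partial P_i\partial P_j\|$, which is nonsingular by hypothesis. By the implicit function theorem there exists locally a unique $\mc{C}^1$ curve $\mb{P}(k)$ satisfying $\partial Q_T/\partial P_j(\mb{P}(k),k)=0$ for every $j$.

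Next I would define the restricted function $q(k):=Q_T(\mb{P}(k),k)$ and differentiate by the chain rule:
\[ q'(k)=\sum_{j=1}^{N_p}\frac{\partial Q_T}{\partial P_j}(\mb{P}(k),k)\,P_j'(k)+\frac{\partial Q_T}{\partial k}(\mb{P}(k),k). \]
Since $\mb{P}(k)$ satisfies the defining implicit equations, the first sum vanishes identically along the curve and only the last term survives. Hence $q'(k')=0$ is equivalent to $\partial Q_T/\partial k(\mb{P}(k'),k')=0$, and combined with the $N_p$ curve equations this yields the full unconstrained stationarity condition $\nabla Q_T(\mb{P}(k'),k')=0$. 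Convexity of $Q_T$ then promotes this critical point to the (unique) global minimum, so the curve indeed threads the minimum surface of $Q_T$. Uniqueness of the line $\mb{P}(k)$ follows from the uniqueness clause of the IFT together with the nondegenerate Hessian condition, so the $N_p$ implicit equations plus $q'(k')=0$ pin down the curve through the minimum unambiguously.

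The main obstacle is less a technical computation than a careful accounting of what ``determines uniquely'' means in the statement. One must check (i) that the $\mc{C}^1$ curve produced by the IFT extends throughout the range of $k$ of interest rather than only in a small neighborhood, which is handled by invoking the Hessian hypothesis globally on the open set; (ii) that the stationary point $k'$ of $q$ is a genuine minimum of $Q_T$ and not a saddle point or boundary extremum, which is exactly what the convexity hypothesis on $Q_T$ is needed for; and (iii) that the solution is consistent with Theorem~\ref{t:3}, namely that the constrained single-variable optimum along the curve really coincides with the unconstrained optimum, because the $N_p$ equations that define the curve are precisely the conditions~(\ref{e:25n}) whose vanishing was shown there to force the two solutions to agree. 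Once these three points are settled, the statement reduces to a one-line application of the chain rule to $q(k)$.
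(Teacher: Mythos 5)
Your argument is essentially identical to the paper's proof: you apply the implicit function theorem to the system $\partial Q_T/\partial P_j=0$ with the nonsingular Hessian block as the Jacobian to obtain the unique curve $\mb{P}(k)$, then differentiate the restricted function by the chain rule, observe that the first sum vanishes along the curve so that $q'(k')=0$ yields the full stationarity condition $\nabla Q_T=0$, and finally invoke convexity to identify this point as the unique global minimum. The paper follows exactly this sequence (its $h_j$, $Q_1(k)$, and equation for $Q_1'(k)$ correspond to your implicit system, $q(k)$, and chain-rule expansion), so your proposal is correct and matches the paper's route.
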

\begin{proof}
As before $f(i)=f(\mb{P},t_i,k)$ so that 
\begin{equation} \label{e:n38}
Q_T=Q(\mb{P},k)=\sum_{i=1}^{N^\prime} (Y_{exp(t_i)}-f(i))^2.
\end{equation}
Define $\frac{\partial f(i)}{\partial P_j} =f(i,j)$, $\alpha_k=Y_{exp}(t_k)-f(k)$ and for an independent variation of the variables $(\mb{P},k)$ at the stationary point, we have 
\begin{eqnarray}
 \frac{\partial Q_T}{\partial P_j}&=& h_j(\mb{P},k)=c.\sum_{i=1}^{N^\prime}(Y_{exp}(t_i)-f(i))f(i,j)=0(j=1,2\ldots N_p)  \label{e:n39}\\
           \frac{\partial Q_T}{\partial k}  &=& I(\mb{P},k)=c.\sum_{i=1}^{N^\prime}\alpha_i \frac{\partial f(i)}{\partial k} =0    \label{e:n40}  
\end{eqnarray}
The above results for  the functions $h_j(\mb{P},k)=0\,(j=1,2\ldots N_p)$  to have a unique implicit function of $k$ denoted $\mb{P}(k)$  by the IFT \cite[Th.13.7,p.374]{apos1} requires that $det \left \|\frac{\partial h_i(\mb{P},k)}{\partial P_j}  \right \|=det \left \|\frac{\partial Q_T}{\partial P_i \partial P_j}  \right \|\neq 0$ on an open set $S,k\in S$. More formally, the expansion of the preceding determinant in (\ref{e:n41})   verifies that  a symmetric matrix  obtains for  $ \frac{\partial h_i(\mb{P},k)}{\partial P_j}$ due to the commutation of  second order partial derivatives of $P_j$
\begin{equation} \label{e:n41}
\frac{\partial h_i(\mb{P},k)}{\partial P_j}=c.\sum_l^{N^\prime}\left(\alpha_k\frac{\partial^2f(l)}{\partial P_j\partial P_i} -\frac{\partial f(l)}{\partial P_j}.\frac{\partial f(l)}{\partial P_i} \right )
\end{equation}
Defining $Q_1(k)$ as a function of $k$ only by expanding $Q_T$ yields the total derivative $w.r.t.\,k$  as      $Q_1^\prime(k)$  where \begin{eqnarray}
           Q_1(k)    &=& \sum_i^{N^\prime} (\alpha_i)^2 \label{e:n42}\\
            Q^\prime_1(k) &=& c.\sum_{j=1}^{N_p}\frac{dP_j}{dk}.\sum_{i=1}^{N^\prime}\alpha_i\frac{\partial f(i)}{\partial P_j}  +c.\sum_{i=1}^{N^\prime} \alpha_i \frac{\partial f(i)}{\partial k}  \label{e:n43}  
\end{eqnarray}
Then $h_i(\mb{P},k)=0$ by construction (\ref{e:n39}) so that $\frac{\partial Q_T}{\partial P_j}=0\,\,(j=1,2,\ldots N_p) $  and (\ref{e:n39}) implies $\sum_{i=1}^{N^\prime}\alpha_i\frac{\partial f(i)}{\partial P_j}=0\,\,(\forall j)$ and hence 
\begin{equation} \label{e:n44}
\left( \frac{dP_j}{dk}.\sum_{i=1}^{N^\prime}\alpha_i  \frac{\partial f(i)}{\partial P_j}   \right)=0.
\end{equation}
Substituting (\ref{e:n44}) derived from  (\ref{e:n39}) and (\ref{e:n40}) into (\ref{e:n43})  together with the condition $Q_1^\prime(k)=0$  implies that $c.\sum_{i=1}^{N^\prime}\alpha_i \frac{\partial f(i)}{\partial k}=0$, which satisfies (\ref{e:n40}) for the free variation in $k$. Thus $Q^{\prime}_1(k)=0 \Rightarrow \delta Q_T=0 $ for independent variation of $(\mb{P},k)$. So $Q_T$ fulfills the criteria of a stationary point at say $k=k_0$, since $\nabla_{\mb{P},k}Q_T=0$ (\cite[Prop. 16, p.112]{depree1}). Suppose that $Q_T$ is convex, where $P_T=\{\mb{P}_0,k_0\}$ is a minimum  point, $P_T\in D$, a convex subdomain of $Q_T$. Then at $P_T$, $\nabla_{\mb{P},k}Q_T=0$ , and $P_T$ is also  the unique global minimum over $D$ according to \cite[Theorem 3.2,pg.46]{crav1}. Thus $P_T$ is unique, whether derived from a free variation of $(\mb{P},k)$ or via $P(k)$ dependent parameters with the $Q_1$ function. 
\end{proof}
$Nb.$ As before, $Q_T$ and $Q_1$ may be replaced with the summation of indexes  as for $Q_c$ in (\ref{e:N30}) to derive a physically more accurate fit.
\section{Method (ii) theory} \label{sec:2c}
Methods (i)a and  (i)b, which are  mutual variants of each other  are  applications of the implicit method to modeling problems. Here, another variant of the implicit  methodology for optimization of   a target or cost function $Q_{E}$ is presented. One can for instance consider $Q_E(\mb{P},k)$ to be an energy function with  coordinates $\mb{R}=\{\mb{P},k\}$, where  as before the components of $\mb{P}$ is $P_j, j=1,2,...\ldots N_p$ , $k \in \R$ is another coordinate  so that  $\mb{R}\in \R^{N_p+1}$.  For bounded systems, (such as  the  molecular coordinates) , one can write
\begin{equation} \label{e:n44b}
l_{min,i}\leq P_i \leq l_{max,i}\,, i=1,2,...\ldots N_p ;\,\, l_{min,k}\leq k \leq l_{max,k}\,.
\end{equation}
Thus $\mb{R}\in \mc{D}\subset \R^{N_p+1}$ is in a compact space $\mc{D}$.
Define 
\begin{eqnarray}
 \frac{\partial Q_E}{\partial P_j}           &=& o_j(\mb{P},k)\,j=1,2....\ldots N_p  \label{e:n45}\\
          \frac{\partial Q_E}{\partial k}    &=& \kappa(\mb{P},k)   \label{e:n46}  
\end{eqnarray}
Then the equilibrium conditions become 
\begin{eqnarray}
            o_j(\mb{P},k)   &=&  0 \label{e:n47}\\
           \kappa (\mb{P},k)    &=& 0   \label{e:n48}  
\end{eqnarray}
Take (\ref{e:n45}) as the defining equations for $o_j(\mb{P},k)$ which is specified by $Q_E$ in (\ref{e:n45}) which casts it in a form compatible with the IFT where some further qualification is required for $(\mb{P},k)$. Assume $Q_E$ is $\mc{C}^2$ on $\mc{D}$, and $det\left\|  \frac{\partial o_i}{\partial P_j}   \right\|\neq 0$, where $\frac{\partial o_i}{\partial P_j} \equiv \frac{\partial^2 Q_E}{\partial P_j \partial P_i}$. The matrix of the aforementioned determinant is symmetric, partaking of the properties due to this fact. Then by the IFT \cite[TH. 13.7,p.374]{apos1}, $\exists $ a unique  $\mb{P}(k)$ function  where for some $k_0$, $o_j(\mb{P}(k_0),k_0)=0$ ,  $o_j \in \mc{C}^1$ on $\mc{T}_0$ with $(\mc{T}_0\times \R)\subseteq \mc{D}$  and $k_0\in \mc{T}_0$ such that $o_j(\mb{P}(k),k)=0$ for all $k\in \mc{T}_0$. For $k$ an isolated point $k=k_o$, from analysis,  we find $k$ is still open.  Write $Q_{E,1}(k)=Q_E(\mb{P}(k),k)$, and $Q^\prime_{E,1}(k)=\frac{dQ_{E,1}}{dk}$ , so that 
\begin{equation} \label{e:n49}
\frac{dQ_{E,1}}{dk} = \sum_{j=1}^{N_p}\left( \frac{\partial Q_E}{\partial P_j }.\frac{dP_j}{dk}   + \frac{\partial Q_E}{\partial k }   \right)= \sum_{j=1}^{N_p}\left( o_j.\frac{dP_j}{dk}   + \frac{\partial Q_E}{\partial k }   \right)
\end{equation}
Then denote $k_i$ as a solution to $Q^\prime_{E,1}(k)=0$ in the indicated range   where $k_i\in \mc{T}_0$  in the indicated range above in (\ref{e:n44b}).

\begin{theorem}\label{th3}
The stationary points $k_i (i=1,2,\dots N_k)$ where $Q_{E,1}(k)=0$  for $k=k_i$ exists for the range $\{k_i : k_{min} \leq  k \leq k_{max}\}$ of coordinate $k$ if and only if   for each  of these $k_i$,  (i) $Q_E(\mb{P},k)\in \mc{C}^2$ and (ii) $det \left\|  \frac{\partial Q_E(\mb{P},k_i)}{\partial P_j \partial P_i}    \right\|\neq 0 (\forall k_i, i=1,2,\ldots k_{max})$. Each of these points $k_i \in \R^1$ space corresponds uniquely in a local sense in the open set $\mc{T}_0$  to some equilibrium (stationary) point  of the target function $Q_E(\mb{P},k)$  in $\R^{N_p+1}$ space.
\end{theorem}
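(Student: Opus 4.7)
The plan is to exploit the Implicit Function Theorem (IFT) in exactly the same spirit as Theorem \ref{th2}, collapsing the full gradient condition on $Q_E$ in $\R^{N_p+1}$ to the single scalar equation $Q'_{E,1}(k)=0$ in $\R^1$. The key algebraic fact is that, once $\mb{P}(k)$ is obtained from the IFT as the solution of $o_j(\mb{P}(k),k)=0$, the first summation in equation (\ref{e:n49}) vanishes identically, and the chain rule collapses to $Q'_{E,1}(k) = \kappa(\mb{P}(k),k)$. This single identity drives both directions of the ``if and only if.''

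For the sufficiency direction, I would assume (i) $Q_E\in\mc{C}^2$ on $\mc{D}$ and (ii) the symmetric Hessian block $\|\partial^2Q_E/\partial P_i\partial P_j\|$ is nonsingular at the candidate point. Conditions (i) and (ii) guarantee that each $o_j$ is $\mc{C}^1$ and that the Jacobian of $(o_1,\ldots,o_{N_p})$ with respect to $\mb{P}$ is invertible, so the IFT yields a unique $\mc{C}^1$ function $\mb{P}(k)$ on an open neighborhood $\mc{T}_0$ of $k_i$ with $o_j(\mb{P}(k),k)\equiv 0$. Substituting into (\ref{e:n49}) gives $Q'_{E,1}(k) = \kappa(\mb{P}(k),k)$, so $k_i$ is a stationary point of $Q_{E,1}$ iff $\kappa(\mb{P}(k_i),k_i)=0$. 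Combined with $o_j(\mb{P}(k_i),k_i)=0$ from the IFT, this is exactly the full equilibrium system (\ref{e:n47})--(\ref{e:n48}), so $(\mb{P}(k_i),k_i)$ is a stationary point of $Q_E$ in $\R^{N_p+1}$.

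For the necessity direction, I would start from an equilibrium point $(\mb{P}^\ast,k^\ast)$ of $Q_E$: by definition all $o_j(\mb{P}^\ast,k^\ast)=0$ and $\kappa(\mb{P}^\ast,k^\ast)=0$. Provided conditions (i) and (ii) hold at this point, the IFT again produces the same locally unique $\mb{P}(k)$ with $\mb{P}(k^\ast)=\mb{P}^\ast$, and the collapsed chain-rule identity forces $Q'_{E,1}(k^\ast)=\kappa(\mb{P}^\ast,k^\ast)=0$. Uniqueness in the ``local sense'' of the statement follows directly from the uniqueness clause of the IFT on $\mc{T}_0$: the curve $\{(\mb{P}(k),k):k\in\mc{T}_0\}$ is the only branch of the variety $o_j=0$ through $(\mb{P}^\ast,k^\ast)$, so the bijection $k_i\leftrightarrow(\mb{P}(k_i),k_i)$ is well-defined locally.

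The main obstacle, in my view, is less a hard calculation than a careful parsing of what the ``if and only if'' actually quantifies. Conditions (i) and (ii) are needed even to \emph{define} the reduced function $Q_{E,1}(k)$ via the IFT, so one must phrase the equivalence so that it concerns the correspondence between the stationary $k_i$ of $Q_{E,1}$ and the stationary $(\mb{P}^\ast,k^\ast)$ of $Q_E$, rather than reading it as an unconditional existence statement. A secondary subtlety is that equilibrium points of $Q_E$ where the $\mb{P}$-Hessian is singular are precisely those not captured by this implicit-function reduction; the non-degeneracy hypothesis (ii) is therefore not a technicality but delineates exactly which stationary points of $Q_E$ are visible to the reduced-variable method.
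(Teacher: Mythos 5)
Your proof is correct and follows essentially the same route as the paper's: the IFT applied to $o_j(\mb{P},k)=0$ under hypotheses (i) and (ii) produces the local curve $\mb{P}(k)$, the first sum in (\ref{e:n49}) vanishes along it so that $Q'_{E,1}(k)=\kappa(\mb{P}(k),k)$, and both directions of the equivalence together with local uniqueness follow from this identity and the uniqueness clause of the IFT. Your closing remarks on how the biconditional must be quantified, and on the fact that stationary points of $Q_E$ with singular $\mb{P}$-Hessian are precisely those invisible to the reduction, make explicit a caveat the paper leaves implicit, but the underlying argument is the same.
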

\begin{proof} 
If $Q^\prime_{E,1}(k)=0$    where $k\in \mc{T}_0$, then it also follows from the IFT that $o_j=0$, and therefore $\frac{\partial Q_E}{\partial k} = 0$ from (\ref{e:n49}), which satisfies (\ref{e:n45} )  and (\ref{e:n46}) for the equilibrium point. The conditions (i) and (ii) of the theorem is a requirement of the IFT.  Conversely, if $o_j=0, (j=1,2,\ldots N_p)$ and $\frac{\partial Q_E}{\partial k} =0$ (a stationary or equilibrium point), then by (\ref{e:n49}) $Q^\prime_{E,1}(k)=0$. Hence the coordinates $\{k_i\}$ for which $Q^\prime _{E,1}(k_i)=0$  refers to the condition where $\delta Q_E(\mb{P},k_i)=0$, and uniqueness follows from the IFT reference to the local uniqueness of the $\mb{P}(k)$ function.
\end{proof}
$Nb.$ In a bounded system, one can choose any of the $N_p$ components $P_j$ of $\mb{P}$ as the $k$ coordinate, partly based on the convenience of solving the implicit equations and determine the $k_i$ minima and thus determine by the uniqueness criterion the  coordinates of the minima  in $\R^{N_p+1}$ space. For non-degenerate coordinate choice, meaning that for a particular $k$  coordinate choice, there does not exist an equilibrium structure (meaning a set of coordinate values) where  for any two structures $A$ and $B$, $k_A=k_B$. For such structures, the total number of minima that exists within the bounded range in the $k$ coordinate equals the total number of minima of the target function $Q_E(\mb{P},k)$ within the bounded range. Hence a  method exists for the very challenging  problem of locating and enumerating minima \cite[Sec 5.1,p.242 "How many stationary points are there" ]{wales1} . From the uniqueness theorem of IFT, one  could infer points in the $k$  axis where non-uniqueness, obtains, i.e. whenever $det\left\|  \frac{\partial^2 Q_E}{\partial P_i \partial P_j}   \right\|=0$. In such cases, for particles with the same intermolecular potentials, permutation of the coordinates in conjunction with symmetry considerations could be of use in selecting the appropriate coordinate system to overcome these systems with degeneracies \cite[Sec. 4.2.5,p.205, "Appearance and dissappearance of symmetry elements"]{wales1}. Other methods  include scanning through one different $1-D$ graph  for  the $P_l$ coordinate to locate minima if relative to the $P_i$ coordinate, there exists for the same particular  $k_i$ value in the $P_i$ coordinate, there exists two structures with two different values for the $P_l$ coordinate.  Thus by  scanning through all or a select number of  the (1-D) $P_j$ profile for $Q_{E,1}$ , it would be possible to make an assign  of the location of a minima in $\R^{Np+1}$ space. One is reminded of the methods that spectroscopists use in assigning different energy  bands  based on selection rules to uniquely characterize for instance vibrational frequencies. A similar analogy obtains for X-ray reflections, where the amplitude variation of the X-ray intensity in  reciprocal space can be used to elucidate structure. The minima of the $1-D$ k coordinate scan must correspond to the minima in $\R^{Np+1}$ space of the $Q_E$ function given that all such minima in $Q_E$ are locally strict and global within  a small open set about the minima. By continuity, $Q_{E,1}(k)-Q_{E,1}(k_0) >  0$   for $\mid k-k_o\mid < \delta$ and for $\mid P(k)-P(k_0)\mid  <\delta_2$ which violate the condition for a maximum.
 
\section{Applications in Chemical Kinetics} \label{sec:3}
The utility of the  above triad of methods   is illustrated in  the determination of two parameters in chemical reaction rate studies, of $1^{\mbox{st} }$ and $2^{\mbox{nd} }$ order respectively using data from published literature , where method(i)a yields values close  within experimental error to those quoted in the literature.  The method  can directly derive certain parameters like the final concentration terms (e.g. $\lambda_\infty$ and $Y_\infty$  if $k$, the rate constant is the single optimizing variable in this approximation. We assume here that the rate laws and rate constants  are not slowly varying functions of  the reactant or product concentrations, which has recently from simulation been shown  to be generally not  the case \cite{cgj1}. Under this  standard assumption, the rate equations below all obtain.
The first order reaction studied here is 
(i) the methanolysis of ionized phenyl salicylate with data derived from the literature \cite[Table 7.1,p.381]{khan2} 
and the second order reaction analyzed is 
(ii)  the reaction between plutonium(VI) and iron(II) according to the data in \cite[Table II p.1427]{newt1} and \cite[Table 2-4, p.25]{bkreac15}.\\

\subsection{First order results}\label{subsec:2a}
Reaction (i) above  corresponds to 
 \begin{equation} \label{eq:1a}
\mbox{PS}^{-}+\mbox{CH$_3$OH}\,\, \stackrel{k_a}{\longrightarrow}\,\,  \mbox{MS}^{-}  + \mbox{PhOH} 
\end{equation}
 where  the rate law is pseudo first-order  expressed as  
\[\mbox{rate}=k_a\mbox{[PS]}^{-}=k_c[\mbox{CH$_3$OH}][\mbox{PS}^{-}]\]  
with   the concentration of methanol held constant  (80\% v/v) and where the physical and thermodynamical conditions   of the reaction appears in \cite[Table 7.1,p.381]{khan2}.
The change in  time $t$ for any material property  $\lambda(t)$, which in this case  is the Absorbance $A(t)$ (i.e. $A(t)\equiv \lambda(t)$) is given by 
\begin{equation} \label{eq:1}
\lambda(t)= \lambda _\infty -(\lambda_\infty - \lambda_0)\exp{(-k_at)}
\end{equation}
for a first order reaction  where $\lambda_0$ refers to the  measurable property value at time $t=0$ and $\lambda_\infty$ is the value at $t=\infty$ which is usually treated as a parameter to yield the best least squares fit  even if its optimized value is less for monotonically increasing functions (for positive $\frac{d\lambda}{dt}$at all $t$)  than an  experimentally determined $\lambda(t)$ at time $t$. In Table 7.1 of \cite{khan2} for instance, $A(t=2160 s)=0.897 >A_{opt,\infty}=0.882$  and this value of $A_\infty$ is used to derive the best  estimate of the rate constant as $16.5\pm 0.1\times 10^{-3}\mbox{sec}^{-1}$. \\
For this reaction, the $P_i$ of (\ref{eq:n2}) refers to $\lambda_\infty$  so that  $\mathbf{P}\equiv \lambda_\infty$  with $N_p=1$ and $k\equiv k_a$. To determine the parameter $\lambda_\infty$ as a function of $k_a$ according to (\ref{eq:n5}) based on the \emph{entire}  experimental $\{(\lambda_{exp},t_i)\}$ data set we invert (\ref{eq:1}) 
and write 
\begin{equation} \label{eq:1c}
\lambda_{\infty}(k)=\frac{1}{N^\prime}\sum_{i=1^\prime}^{N^\prime}\frac{(\lambda_{exp}(t_i)- \lambda_{o}\exp{-kt_i} )}{(1-\exp{-kt_i})}
\end{equation}
where the summation is for all the values of the experimental dataset that does not lead to singularities, such as when $t_i=0$, so that here $N_s=1$. We define the non-optimized, continuously deformable theoretical curve $\lambda_{th}$ where $\lambda_{th}\equiv Y_{th}(t,k)$ in (\ref{eq:n3}) as 
\begin{equation} \label{eq:1d}
\lambda_{th}(t,k)= \lambda _\infty(k) -(\lambda_\infty(k) - \lambda_0)\exp{(-k_at)}
\end{equation}
With such a relationship of the  $\lambda_{\infty}$ parameter $P$ to $k$, we seek the  least square minimum of $Q_1(k)$, where $Q_1(k)\equiv Q$ of (\ref{eq:n5}) for this first-order rate constant k  in the form  
\begin{equation} \label{eq:1e}
Q_1(k)=\sum_{i=1}^N (\lambda_{exp}(t_i) -\lambda_{th}(t_i,k))^2
\end{equation}
where the summation is over all the experimental $(\lambda_{exp}(t_i), t_i)$ values. 
The resulting $P_k$ function (\ref{eq:n6})  for the first order reaction based on  the published dataset is given in Fig.(\ref{fig:1n}). The solution of the rate constant $k$ corresponds to the zero value of the function, which exists for both orders. The $\mathbf{P}$ parameters ($\lambda_\infty$ and $Y_\infty$ ) are derived by back substitution into eqs. (\ref{eq:1c}) and (\ref{eq:3}) respectively. The Newton-Raphson (NR) numerical procedure \cite[p.456]{nrc} was used to find the roots to $P_k$. For each dataset, there exists a value for $\lambda_\infty$ and so the error expressed as a standard deviation may be computed. The tolerance in accuracy for the NR procedure was $1.\times 10^{-10}$ . We define the function deviation $fd$ as the standard deviation of the  experimental results with the best fit curve $fd=\surd\frac{1}{N} \{\sum_{i=1}^N(\lambda_{exp}(t_i)-\lambda_{th}(t_i)^2\}$ Our results are as follows:\\
 $k_a=1.62\pm.09\times10^{-2}\mbox{s}^{-1}$; $\lambda_\infty=0.88665\pm.006$; and $fd=3.697\times 10^{-3}$. \\ The experimental estimates are :\\
  $k_a=1.65\pm.01\times10^{-2}\mbox{s}^{-1}$; $\lambda_\infty=0.882\pm 0.0$; and  $fd=8.563\times 10^{-3}$.\\
 The experimental method involves adjusting the $A_\infty\equiv \lambda_\infty$ to minimize the  $fd$ function and hence no estimate of the error in $A_\infty$ could be made. It is clear that our method has a lower $fd$  value and is thus a better fit, and the parameter values can be considered to coincide with the experimental estimates  within experimental error. Fig.(\ref{fig:2n}) shows the close fit between the curve due to our optimization procedure and experiment. The slight variation between the two curves may well be due to experimental uncertainties.  
\begin{figure}[htbp]
\begin{center}
\includegraphics[width=9cm]{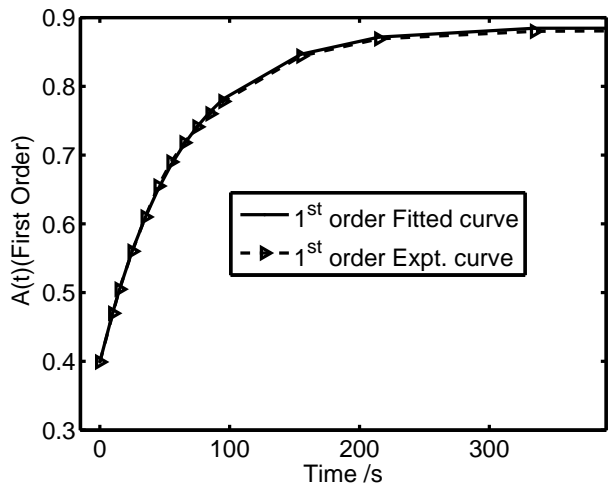} 
\end{center}
\caption{Plot of the experimental and curve with optimized parameters showing the very close fit between the two. The slight difference between the two can  probably be   attributed to experimental errors. }
\label{fig:2n} 
\end{figure}
 
\begin{figure}[htbp]
\begin{center}
\includegraphics[width=7cm]{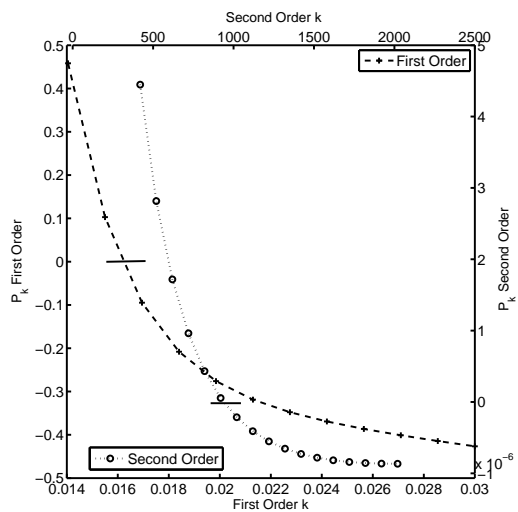} 
\end{center}
\caption{$P_k$ functions  (\ref{eq:n6})  for reactions (i) and (ii) of order one and two in reaction rate.  }
\label{fig:1n} 
\end{figure}
\subsection{Second order results}\label{subsec:2b}
To further test our method, we also analyze the second order reaction
 \begin{equation} \label{eq:1b}
\mbox{Pu(VI)}+2\mbox{Fe(II)}\,\, \stackrel{k_b}{\longrightarrow}\,\,  \mbox{Pu(IV)}  + 2\mbox{Fe(III)} 
\end{equation}
whose rate is given by $\mbox{rate}=k_0[\mbox{PuO}^{2+}_2][\mbox{Fe}^{2+}]$ where $k_0$ is relative to the constancy of other ions in solution such as $\mbox{H}^+$. The equations are very different in form to the first-order expressions and serves to confirm the viability of the current method.

For  Espenson, the above stoichiometry  is kinetically equivalent to the reaction scheme  \cite[eqn. (2-36)]{bkreac15}
\[\mbox{PuO}_{2}^{2+}+ \mbox{Fe}^{2+}_{aq}   \stackrel{k_b}{\longrightarrow}\, \mbox{PuO}^+_2 +\mbox{Fe}^{3+}_{aq}.\]  
which also follows from the work of  Newton et al. \cite[eqns. (8,9),p.1429]{newt1}  whose data \cite[TABLE II,p.1427]{newt1} we use and analyze to  verify  the principles presented here. Espenson had also used the same data as we have to derive the rate constant and other parameters \cite[pp.25-26]{bkreac15} which is used to  check the accuracy of our methodology. The overall absorbance in this case $Y(t)$ is given by \cite[eqn(2-35)]{bkreac15}
\begin{equation} \label{eq:2}
Y(t)= \frac{ Y_{\infty} + \left\{ Y_0 \left( 1-\alpha \right) -Y_\infty    \right\}\exp(-k\Delta_0 t)   }
{1-\alpha\exp(-k\Delta_0t)}
\end{equation}
where $\alpha=\frac{[\text{A}]_0}{[\text{B}]_0}$ is the  ratio  of initial concentrations where $[\text{B}]_0>[\text{A}]_0$ and $[\text{B}]=[\mbox{Pu(VI)}]$, $[\text{A}]=[\mbox{Fe(II)}]$ and $[\text{B}]_0=4.47\times 10^{-5}\text{M}$ and $[\text{A}]_0=3.82\times 10^{-5}\text{M}$ . A rearrangement of (\ref{eq:2}) leads to  the equivalent expression \cite[eqn(2-34)]{bkreac15}

\begin{equation} \label{eq:3}
\ln \left\{ 1+ \frac{\Delta_0\left(Y_0-Y_\infty\right)}{[\text{A}]_0\left(Y_t - Y_\infty \right)}\right\}
=\ln\frac{[\text{B}]_0}{[\text{A}]_0} +k\Delta_0t. 
\end{equation}
According to Espenson, one cannot use this equivalent form \cite[p.25]{bkreac15} "because an experimental value of $Y_\infty$ was not reported."  However, according to Espenson, if $Y_\infty$ is  determined autonomously, then $k$ the rate constant may be determined. Thus, central to all conventional methods is the autonomous and independent status of both $k$ and $Y_\infty$.  We overcome this interpretation by defining $Y_\infty$ as a function of the total experimental spectrum of $t_i$ values and $k$  by inverting (\ref{eq:2}) to define $Y_{\infty}(k)$ where 
\begin{equation} \label{eq:3b}
Y_{\infty}(k)=\frac{1}{N^\prime}\sum_{i=1^\prime}^{N^\prime}\frac{Y_{exp}(t_i)\left\{\exp(k\Delta_0t_i)-\alpha) \right\}+Y_0(\alpha-1)}{(\exp(k\Delta_0t_i)-1)}
\end{equation}
where the summation is over all experimental values that does not lead to singularities such as at $t_i=0$.
In this case, the $\mathbf{P}$ parameter is given by Y$_{\infty}(k)=P_1(k)$, $k_b=k$ is the varying $k$ parameter of (\ref{eq:n2}).  We likewise define a  function $Y_{th}$ of $k$  that is also a function of t, but where the $k$ parameter is interpreted as  a "distortion"  parameter in the following manner:
\begin{equation} \label{eq:3c}
Y(t,k)_{th}= \frac{ Y_{\infty}(k) + \left\{ Y_0 \left( 1-\alpha \right) -Y_\infty(k)    \right\}\exp(-k\Delta_0 t)   }
{1-\alpha\exp(-k\Delta_0t)}.
\end{equation}
 In order to extract the parameters $k$ and $Y_{\infty}$ we minimize
 the  square function   $Q_2(k)$  for this second order  rate constant with respect to $k$  given as 
\begin{equation} \label{eq:3d}
Q_2(k)=\sum_{i=1}^N (Y_{exp}(t_i) -Y_{th}(t_i,k))^2
\end{equation}
where the summation is over the experiment $t_i$ coordinates. Then the solution to the minimization problem is when the corresponding $P_k$ function (\ref{eq:n6}) is zero. The NR method was  used to  solve $P_k=0$ with the error tolerance of $1.0\times10^{-10}$.
With the same notation as in the first order case, the second order results are:\\
  $k_b=938.0\pm 18 \mbox{(M s)}^{-1}$; $Y_\infty=0.0245 \pm 0.003$; and $fd=9.606\times 10^{-4}$. \\
 
The experimental estimates are \cite[p.25]{bkreac15}:\\
  $k_b=949.0\pm 22\mbox{(M s)}^{-1}$; $Y_\infty=0.025 \pm 0.003$.\\

  Again the two results are in close agreement. The graph of the experimental curve and the one that derives from our optimization method in given in Fig.(\ref{fig:3n}).
  
\begin{figure}[htbp]
\begin{center}
\includegraphics[width=9cm]{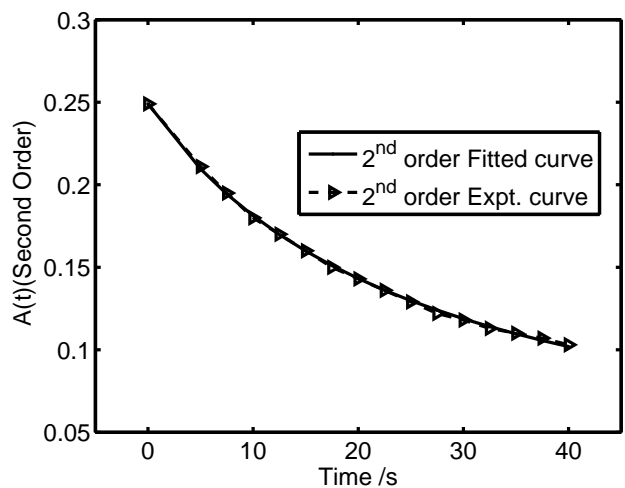} 
\end{center}
\caption{Graph of the experimental and calculated curve based on the current induced parameter-dependent optimization method. }
\label{fig:3n} 
\end{figure}

\section{Conclusions}
The triad of associated implicit function optimization covers both modeling of data \textit{and} the optimization of arbitrary functions where experimental or theoretical considerations require  that a single variable is tagged  to a process variable that is iteratively relaxing to equilibrium. Applying method (i)a to chemical kinetics allows for the direct determination of parameters not possible by the standard methodologies used.
The results presented here show that for linked variables, it is possible to derive all the parameters associated with a curve by considering only one independent variable which serves as the independent variable for  other functions in the optimization process that uses the experimental dataset as function values   in the estimation. Apart from possible reduced errors in  the computations, it might also be  a more accurate way of deriving parameters that are more influenced or conditioned (on physical grounds)  by the value of one parameter (such as $k$ here) than others; the current methods that gives equal weight  to all the variables might in some cases lead to results that would be considered "unphysical". In complex dynamical systems with multiprocesses, the physical considerations are such that for scientific purposes, it would be advantageous if optimization would be conducted on just one primary coordinate variable, such as in attempting to derive the most general stable conformer in a large molecule, where there are thousands of local minima present if all free coordinate variables are considered \cite[Sec.6.7, p.330]{wales1} .  This generalized potential surface might be found   suitable for  reaction trajectory calculations \cite[Ch.4, p.192 on "Features of a landscape"]{wales1} that require a single path variable, where the  general optimized conformer  would be relevant to the study of the potential surfaces and force fields present.

\section{Acknowledgments}
This work was supported by University of Malaya Grant UMRG(RG077/09AFR) and Malaysian Government grant   FRGS(FP084/2010A). Cordial discussions concerning real world applications with Gareth Tribello (ASC) is acknowledged. I thank Jorge Kohanoff (ASC) and Christopher Hardacre (Chem. Dept., QUB) for congenial hospitality during this sabbatical. \\ \\

\bibliographystyle{unsrt}	
\bibliography{mpbib}
\end{document}